\author{Aboubacar Nibirantiza} 
\address[Aboubacar Nibirantiza]{University of Burundi, Institute of Applied Pedagogy, Department of mathematics, B.P 2523, Bujumbura-Burundi}
\email{aboubacar.nibirantiza[at].ub.edu.bi}
\date{\today} 
\title[On the matrix realization of the Lie superalgebra $\mathfrak{spo}(2l+2|n)$]{On the matrix realization of the Lie superalgebra of contact projective vector fields $\mathfrak{spo}(2l+2|n)$}
\newtheorem{theorem}{Theorem}[section]
\newtheorem{df}[theorem]{Definition}
\newtheorem{rmk}[theorem]{Remark}
\newtheorem{prop}[theorem]{Proposition}
\newcommand{\R}{\mathbb{R}}
\newcommand{\N}{\mathbb{N}}
\newcommand{\id}{\mathrm{id}}
\newcommand{\spo}{\mathfrak{spo}}
\newcommand{\pgl}{\mathfrak{pgl}}
\newcommand{\gl}{\mathfrak{gl}}
\newcommand{\cK}{\mathcal{K}}		
\begin{document}

\begin{abstract}
In this paper, we show that the Lie superalgebra $\mathfrak{spo}(2l+2|n)$ is into the intersection of Lie superalgebra of contact vector fields $\mathcal{K}(2l+1|n)$ and the Lie superalgebra of projective vector fields $\mathfrak{pgl}(2l+2|n)$. We use mainly the embedding used by P. Mathonet and F. Radoux in "\textit{ Projectively equivariant quantizations over superspace $\mathbb{R}^{p|q}$. Lett. Math. Phys, 98: 311-331, 2011}". Explicitly, we use  the embedding of a Lie superalgebra constituted of matrices belonging to $\gl(2l+2|n)$ into $\mathrm{Vect}(\R^{2l+1|n})$.  We generalize thus in superdimension $2l+1-n$, the matrix realization described in \cite{MelNibRad13} on $S^{1|2}$. We mention that the intersection $\spo(2l+2|n)=\pgl(2l+2|n)\cap\cK(2l+1|n)$ that we prove here, in super case, has been prooved on $\R^{2l+2}$ in even case in \cite{CoOv12}. 
\end{abstract}

\keywords{Contact structure on superspaces, Lie superalgebras, supergeometry}

\maketitle

\section{Introduction}
The present paper is based on the concepts of supergeometry. It begins with a brief introduction to the notions that we need in the all sections, i.e: superfunctions, vector fields, differential $1$-superforms, etc on the superspace $\R^{m|n}$, where $m$ and $n$ are integers. We describe the supergeometry of $\R^{m|n}$ by its supercommutative superalgebra of superfunctions $C^{\infty}(\R^{m|n})$.\\

Using the standard contact structure on $\R^{2l+1|n}$, where $l$ is also an integer, we compute the formula of the contact vector fields on $\R^{2l+1|n}$. This formula is a generalization of those formulas known in classical geometry, as in \cite{CoOv12} and in supergeometry in low dimensions, as in \cite{GarMelOvs07, Mel09,MelNibRad13}. We also compute, in the super case, the formula of the Lagrange bracket of the superfunctions $f$ and $g$.\\

 As in \cite{MelNibRad13}, we consider an superskewsymmetric form $\omega$ defined on the superspace $\R^{2l+2|n}$ and we realize thus a Lie superalgebra $\spo(2l+2|n)$ constituted by the matrices $A$ of $\gl(2l+2|n)$ which preserve the form $\omega$. We use the method used by P. Mathonet and F. Radoux in \cite{MatRad11}. This construction allows us to embed the Lie superalgebra $\spo(2l+2|n)\subset\pgl(2l+2|n)$ into the Lie superalgebra $\mathrm{Vect}(\R^{2l+1|n})$ of vector fields on $\R^{2l+1|n}$. \\
 
 Thanks to the formula of contact vector fields $X_f$ obtained, for a certain superfuncfion given $f\in C^\infty(\R^{2l+1|n})$ of degree to most equal two in $z,x_i,y_i$ and $\theta_i$ variables, and to the formulas of projective vector fields of $\spo(2l+2|n)\subset\pgl(2l+2|n)$ obtained, we realize that the Lie superalgebra $\spo(2l+2|n)$ is constituted by the contact projective vector fields, i.e: the Lie superalgebra $\spo(2l+2|n)$ is into the intersection of the Lie superalgebra of projective vector fields $\pgl(2l+2|n)$ and the Lie superalgebra of contact vector fields $\cK(2l+1|n)$. To justify the terminology of contact projective vector fields for the elements of $\spo(2l+2|n)$, we refer to \cite{CoOv12}.

\section{Superfunctions on $\R^{2l+1|n}$}
We define the geometry of the superspace $\R^{2l+1|n}$, where $l\in\N,n\in\N^*$, by describing its associative supercommutative superalgebra of superfunctions on $\R^{2l+1|n}$ which we denote by $$C^\infty(\R^{2l+1|n}):=C^\infty(\R^{2l+1})\otimes \Lambda\R^n$$ and which is constituted by the elements 
 \begin{align*}
f(x,\theta)&=\sum_{0\leqslant|I|\leqslant n}{f_I(x)\theta_I}\\
&=f_0(x)+f_1(x)\theta_1+...+f_n(x)\theta_n+f_{12}(x)\theta_1\theta_2+...
+f_{1...n}(x)\theta_1...\theta_n
\end{align*}
where $|I|$ is the length of $I$, $x=(x_i),\quad i=1,\cdots, 2l+1$ is a coordinates system on $\R^{2l+1}$ and where $\theta=(\theta_i),\quad i=1,\cdots, n$ is odd Grassmann coordinates on $\Lambda\R^n$, i.e. $\theta_i^2=0,\quad \theta_i\theta_j=-\theta_j\theta_i$. We define the parity function $\tilde{.}$ by setting $\tilde{x}=0$ and $\tilde{\theta}=1$.

\section{Vector fields on $\R^{2l+1|n}$}
A vector fields on $\R^{2l+1|n}$ is a superderivation of the associative supercommutative superalgebra $C^\infty(\R^{2l+1|n})$. In coordinates, it can be expressed as 
\[
X=\sum_{i=1}^{2l+1}X^i\partial_{x_i}+\sum_{j=1}^nY^j\partial_{\theta_j},
\]
where $X^i$ and $Y^j$ are the elements of  $C^\infty(\R^{2l+1|n})$, $\partial_{x_i}=\frac{\partial}{\partial x_i}$ and $\partial_{\theta_j}=\frac{\partial}{\partial \theta_j}$ for all $i=1,2,\cdots, 2l+1$ and $j=1,2,\cdots,n$. \\
It can also be expressed as 
\[
X=\sum_{i=1}^{p+q}X^i\partial_{z_i},\] where $z_i=x_i$ for all $i\in\{1,\ldots,2l+1\}$ and $z_i=\theta_{i-(2l+1)}$ for all $i\in\{2l+2,\ldots, 2l+1+n\}$.
The parity function $\tilde{.}$ on vector field $X$ is defined as 
\[
\widetilde{\partial_{x_i}}=0\quad \mbox{and}\quad \widetilde{\partial_{\theta_i}}=1.
\]
The superspace of all vector fields on $\R^{2l+1|n}$ is a Lie superalgebra, which we shall denote by $\mathrm{Vect}(\R^{2l+1|n})$, by defining the following Lie bracket
\[
[X,Y]=XY-(-1)^{\tilde{X}\tilde{Y}}YX,
\] for all vector fields $X,Y$.
\section{Differential $1$-superforms on $\R^{2l+1|n}$}
We define the superspace $\Omega^1(\R^{2l+1|n})$ of differential $1$-superforms on $\R^{2l+1|n}$ as a superspace which is constituted by the elements 
\[ \alpha=\sum_{i=1}^{2l+1}{f_i(x_i,\theta_i)dx^i}+\sum_{i=1}^n{g_i(x_i,\theta_i)d\theta^i},\]
where $f_i$ and $g_i$ are elements of $C^\infty(\R^{2l+1|n})$ and $\widetilde{dx^i}=0,\,\widetilde{d\theta^i}=1$ and
where we set $\mathcal{B}'=(dx^i,d\theta^i)$ of $\Omega^1(\R^{2l+1|n})$ the dual basis of a basis $\mathcal{B}=(\partial_{x_i},\partial_{\theta_i})$ of $\mathrm{Vect}(\R^{2l+1|n})$ such that 
\[
\langle\partial_{x_j},dx^i\rangle=\delta^i_j,\quad \langle\partial_{x_j},d\theta^i\rangle=0 \quad\mbox{and}\quad \langle\partial_{\theta_j},d\theta_i\rangle=-\delta^i_j.
\]
\begin{rmk}
These elements $f_i$ and $g_i$ can also be declared at right and in this case we must use the even sign rule known in supergeometry.
\end{rmk}
When we consider a vector field $X$, we can also define the evaluation of differential $1$-superform on $X$, or the interior product of a differential $1$-superform $\alpha$ by $X$ as follow:
\[\alpha(X)=(-1)^{\tilde{X}\tilde{\alpha}}\langle X,\alpha\rangle,\quad\mbox{and}\quad i(X)\alpha=\langle X,\alpha\rangle.\]
Explicitly, if $X=\sum_{i=1}^{2l+1+n}X^i\partial_{z_i}$ and $\alpha=\sum_{j=1}^{2l+1+n}\alpha_jdz_j$, we have via the sign rule,
\begin{equation}\label{coucou1}
\langle X,\alpha\rangle=\langle\sum_{i=1}^{2l+n+1}X^i\partial_{z_i},\sum_{j=1}^{2l+n+1}\alpha_jdz_j\rangle=
\sum_{i,j=1}^{2l+n+1}X^i\alpha_j(-1)^{\tilde{i}\tilde{\alpha_j}}
\langle\partial_{z_i},dz_j\rangle=\sum_{i=1}^{2l+n+1}(-1)^{\tilde{i}(\tilde{\alpha_i}+\tilde{i})}X^i\alpha_i.
\end{equation}
We can generalize the definition of differential superforms and we have also a version of de de Rham differential which is adapted in the framework of supergeometry. Thus it allows us to define the Lie derivative of differential superforms. These operators have the analogue properties known in classical geometry.
\section{Standard contact structure on $\R^{2l+1|n}$}
We consider here the standard contact structure on $\R^{2l+1|n}$. We can find in \cite{Gr13} the notions of the contact structure on any supermanifold of dimension $m|n$.
\begin{df}
The standard contact structure on $\R^{2l+1|n}$ is defined by the kernel of the differential $1$-superforms $\alpha$ on $\R^{2l+1|n}$ which, in the system of Darboux coordinates $(z,x_i,y_i,\theta_j),\quad i=1,\cdots,l$ and $j=1,\cdots,n$ it can be written as 
\begin{equation}\label{STANDARDCONTACT}
\alpha=dz+\sum_{i=1}^l{(x_idy_i-y_idx_i)}+\sum_{i=1}^n{\theta_id\theta_i}.
\end{equation}
This differential $1$-superform $\alpha$ is called contact form on $\R^{2l+1|n}$ and we denote by $\mathrm{Tan}(\R^{2l+1|n})$ the space constituted of the elements of the kernel of $\alpha$. 
\end{df}
If we denote $q^A=(z,q^r)$ the generalized coordinate where 
\begin{equation}\label{gencoordin}
q^A=\left\lbrace 
\begin{array}{lcl}
 z\quad\mbox{if}\quad A=0\\
 x_A \quad\mbox{if}\quad 1\leqslant A\leqslant l,\\ 
 y_{A-l} \quad\mbox{if} \quad l+1\leqslant A\leqslant 2l\\ 
 \theta_{A-2l} \quad\mbox{if}\quad 2l+1\leqslant A\leqslant 2l+n 
\end{array}\right.
\end{equation}
we can write $\alpha$ in the following way
\[
\alpha=dz+\omega_{rs}q^rdq^s, \quad (\omega_{rs})=\left(
\begin{array}{cc|c}
0& \id_l&0\\
-\id_l&0&0\\
\hline
0&0&\id_n
\end{array}
\right).
\]
\begin{rmk}
We denote by $\omega^{sk}$ the matrix so that $(\omega_{rs})(\omega^{sk})=(\delta^k_r)$. We have thus
\[
(\omega^{rs})=\left(
\begin{array}{cc|c}
0&- \id_l&0\\
\id_l&0&0\\
\hline
0&0&\id_n
\end{array}
\right).
\] and $(\omega^{rs})=-(-1)^{\tilde{r}\tilde{s}}(\omega^{sr}).$
\end{rmk}
\begin{df}
We call the field of Reeb  on $\R^{2l+1|n}$, the vector field $T_0\in\mathrm{Vect}(\R^{2l+1|n})$ which, in the system of Darboux coordinates, one write 
$T_0=\partial_z$.
\end{df}
We can show that the field of Reeb is the unique vector field on $\R^{2l+1|n}$ so that $i(T_0)\alpha=1$ and $i(T_0)d\alpha=0$.\\
\begin{prop}
In the system of Darboux coordinates, the elements $T_r$ of $\mathrm{Tan}(\R^{2l+1|n})$ can be written as follow
\begin{equation}\label{tangentdistr}
T_r=\left\lbrace 
\begin{array}{lcl} 
 A_r &:=& \partial_{x_r}+y_r\partial_z\quad\mbox{if}\quad 1\leqslant r\leqslant l\\ 
 -B_{r-l} &:=& \partial_{y_{r-l}}-x_{r-l}\partial_z\quad\mbox{if}\quad l+1\leqslant r\leqslant 2l\\ 
 \overline{D}_{r-2l}&:=& \partial_{\theta_{r-2l}}-\theta_{r-2l}\partial_z \quad\mbox{if}\quad 2l+1\leqslant r\leqslant 2l+n
\end{array}\right.
\end{equation}
\end{prop}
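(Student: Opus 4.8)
The plan is to establish two facts. First, each of the vector fields $A_r$, $-B_{r-l}$, $\overline{D}_{r-2l}$ displayed in \eqref{tangentdistr} annihilates the contact form $\alpha$, hence belongs to $\ker\alpha=\mathrm{Tan}(\R^{2l+1|n})$. Second, together with the field of Reeb $T_0=\partial_z$ these $2l+n$ vector fields form a frame of $\mathrm{Vect}(\R^{2l+1|n})$ adapted to $\alpha$, so that the $T_r$, $1\leqslant r\leqslant 2l+n$, generate the $C^\infty(\R^{2l+1|n})$-module $\mathrm{Tan}(\R^{2l+1|n})$.

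To obtain the explicit formulas I would look for tangent vector fields of the shape $\partial_{q^r}+c_r\,\partial_z$ with $c_r\in C^\infty(\R^{2l+1|n})$: since $i(T_0)\alpha=1$, the requirement $i\bigl(\partial_{q^r}+c_r\partial_z\bigr)\alpha=0$ forces $c_r=-\,i(\partial_{q^r})\alpha$. It then remains to compute $i(\partial_{q^r})\alpha$ from the Darboux expression \eqref{STANDARDCONTACT} and the pairing conventions recalled above, used as in \eqref{coucou1}; only the summand of $\alpha$ containing $dq^r$ survives, and one gets $i(\partial_{x_r})\alpha=-y_r$, $i(\partial_{y_r})\alpha=x_r$, and $i(\partial_{\theta_r})\alpha=\theta_r$. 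Substituting back produces exactly $\partial_{x_r}+y_r\partial_z=A_r$, $\partial_{y_r}-x_r\partial_z=-B_r$ and $\partial_{\theta_r}-\theta_r\partial_z=\overline{D}_r$, so these vector fields all lie in $\mathrm{Tan}(\R^{2l+1|n})$.

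For the second fact, I would note that the matrix expressing the family $(T_0,A_1,\dots,A_l,-B_1,\dots,-B_l,\overline{D}_1,\dots,\overline{D}_n)$ in terms of the coordinate basis $(\partial_z,\partial_{x_i},\partial_{y_i},\partial_{\theta_j})$ differs from the identity only in the row indexed by $\partial_z$, hence is unipotent and invertible over $C^\infty(\R^{2l+1|n})$; thus this family is again a basis of the free module $\mathrm{Vect}(\R^{2l+1|n})$. Writing an arbitrary vector field as $X=f^0T_0+\sum_{r=1}^{2l+n}f^rT_r$ and contracting with $\alpha$ gives $i(X)\alpha=f^0$, since $i(T_0)\alpha=1$ while $i(T_r)\alpha=0$ for $r\geqslant 1$. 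Therefore $X\in\mathrm{Tan}(\R^{2l+1|n})$ if and only if $f^0=0$, which shows that $\mathrm{Tan}(\R^{2l+1|n})$ coincides with the submodule generated by the vector fields listed in \eqref{tangentdistr}.

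The only subtle point is the sign bookkeeping for the odd coordinates in the computation of $i(\partial_{\theta_r})\alpha$: one has to combine the convention $\langle\partial_{\theta_j},d\theta_i\rangle=-\delta^i_j$ with the sign rule used to move the odd function $\theta_r$ past the odd derivation $\partial_{\theta_r}$, the two signs together yielding $i(\partial_{\theta_r})(\theta_r d\theta_r)=\theta_r$ and hence the minus sign in $\overline{D}_r=\partial_{\theta_r}-\theta_r\partial_z$; a single misplaced sign would replace it by $\partial_{\theta_r}+\theta_r\partial_z$. The even computations and the triangular change of frame are entirely routine.
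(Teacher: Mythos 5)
Your proposal is correct and follows essentially the same route as the paper: you seek tangent fields of the form $\partial_{q^r}-\langle\partial_{q^r},\alpha\rangle\partial_z$, compute the contractions with the Darboux expression of $\alpha$ (with the same sign bookkeeping for the odd term $\theta_r d\theta_r$), and conclude that these fields span the kernel of $\alpha$. The only difference is that you spell out the spanning/frame argument, which the paper merely asserts.
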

\begin{proof}
If we denote by $T_r$ the vector field $T_r=\partial_{q^r}-\langle\partial_{q^r},\alpha\rangle\partial_z,$ and since $\widetilde{\alpha}=0$, we have
\[\alpha(T_r)=\langle T_r,\alpha\rangle=
\langle \partial_{q^r},\alpha\rangle-
\langle\langle\partial_{q^r},\alpha\rangle\partial_z,\alpha\rangle=\langle \partial_{q^r},\alpha\rangle-\langle \partial_{q^r},\alpha\rangle=0.\]
We can also show that any vector field $X$ of $\mathrm{Tan}(\R^{2l+1|n})$ can be written as a linear combination of the vector fields $T_r$. It is useful to compute the vector fields $T_r$ according to the matrix $\omega$. One has, via \ref{coucou1}, \[
T_r=\partial_{q^r}-\alpha_r\partial_z=\partial_{q^r}-\omega_{kr}q^k\partial_z.
\]
It is sufficient to vary $r$ in the interval $[1, 2l+n]$ to conclude.
\end{proof}
The following formulas are immediate.
\begin{equation}\label{TANGDIST}
T_r(q^k)=\delta_r^k,\quad T_r(z)=-\omega_{kr}q^k,\quad [T_r,T_j]=-2\omega_{rj}\partial_z,\quad T_r(z^2)=-2z\omega_{kr}q^k.
\end{equation}

\section{Contact vector fields on $\R^{2l+1|n}$}\label{coucou2}
\begin{df}
We call a contact vector field on $\R^{2l+1|n}$ a vector field $X$  that preserves the contact structure, i.e. a vector field $X$ verifying the following condition: $[X,T]\in \mathrm{Tan}(\R^{2l+1|n}) $ for all $T\in \mathrm{Tan}(\R^{2l+1|n})$.
\end{df}
The following proposition is known in the classical geometry \cite{CoOv12} and in supergeometry in small dimensions, i.e: $(1|1)$ and $(1|2)$ in \cite{MelNibRad13,Mel09, GarMelOvs07}. We give here its analogue in supergeometry and generalize it in dimension $(m|n)$. It is our main first result.
\begin{prop}
A vector field $X$ on $\R^{2l+1|n}$ is called contact vector field if and only if it exists a  superfunction $f$ that $X=X_f$ where $X_f$ is given by the following formula
\begin{equation}\label{CONTACTFORME}
X_f=f\partial_z-\frac{1}{2}(-1)^{\tilde{f}\tilde{T_r}}\omega^{rs}T_r(f)T_s,
\end{equation} 
We denote by $\cK(2l+1|n)$ the space of the all contact vector fields on $\R^{2l+1|n}$.
\end{prop}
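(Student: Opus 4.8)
The plan is to convert the geometric condition defining a contact vector field into a $C^\infty(\R^{2l+1|n})$-linear system expressed in the frame $\{\partial_z=T_0,T_1,\dots,T_{2l+n}\}$, and then to solve that system explicitly. First I would record three preliminary facts: that $\{\partial_z,T_1,\dots,T_{2l+n}\}$ is a global frame of $\Vect(\R^{2l+1|n})$, since by \eqref{tangentdistr} the change of frame from $\{\partial_z,\partial_{q^r}\}$ is unitriangular; that $\mathrm{Tan}(\R^{2l+1|n})$ is exactly the $C^\infty(\R^{2l+1|n})$-submodule generated by $T_1,\dots,T_{2l+n}$ (already observed in the proof of the previous proposition); and that for any superfunction $g$ one has $[X,gT_r]=X(g)\,T_r+(-1)^{\tilde X\tilde g}\,g\,[X,T_r]$, whose first summand always lies in $\mathrm{Tan}(\R^{2l+1|n})$. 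Together these reduce the condition ``$[X,T]\in\mathrm{Tan}(\R^{2l+1|n})$ for all $T\in\mathrm{Tan}(\R^{2l+1|n})$'' to the finitely many conditions ``$[X,T_r]\in\mathrm{Tan}(\R^{2l+1|n})$ for $r=1,\dots,2l+n$''. Writing a general $X$ in the frame as $X=f\,\partial_z+\sum_{s=1}^{2l+n}g^s T_s$ and using $\langle\partial_z,\alpha\rangle=i(T_0)\alpha=1$ together with $\langle T_s,\alpha\rangle=0$, the membership $[X,T_r]\in\mathrm{Tan}(\R^{2l+1|n})$ becomes the vanishing of $\langle[X,T_r],\alpha\rangle$, i.e. of the $\partial_z$-component of $[X,T_r]$; the same pairing also gives $f=\alpha(X)$, so $f$ is determined by $X$.

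Next I would compute that $\partial_z$-component. A direct check on \eqref{tangentdistr} gives $[\partial_z,T_r]=0$, so the graded Leibniz rule yields $[f\partial_z,T_r]=-(-1)^{\tilde f\tilde T_r}T_r(f)\,\partial_z$, while \eqref{TANGDIST} and the Leibniz rule give $[g^sT_s,T_r]=-2\,g^s\omega_{sr}\,\partial_z-(-1)^{\tilde f\tilde T_r}T_r(g^s)\,T_s$. Hence the $\partial_z$-component of $[X,T_r]$ equals $-(-1)^{\tilde f\tilde T_r}T_r(f)-2\sum_{s}g^s\omega_{sr}$, and $X$ is contact if and only if $\sum_{s}g^s\omega_{sr}=-\tfrac{1}{2}(-1)^{\tilde f\tilde T_r}T_r(f)$ for every $r$. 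Contracting with $\omega^{rk}$, summing over $r$, and using $\sum_r\omega_{sr}\omega^{rk}=\delta^k_s$, this linear system has the unique solution $g^s=-\tfrac{1}{2}(-1)^{\tilde f\tilde T_r}\omega^{rs}T_r(f)$, which is precisely the identity $X=X_f$. Running the computation backwards — substituting this value of $g^s$ and using $\sum_s\omega^{rs}\omega_{sj}=\delta^r_j$ — confirms that $X_f$ solves the system for every superfunction $f$; so $f\mapsto X_f$ is a bijection of $C^\infty(\R^{2l+1|n})$ onto $\cK(2l+1|n)$, and in particular each $X_f$ is contact.

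The difficulty will not be analytic — only the bracket algebra of the frame $\{T_0,T_r\}$ enters — but the graded sign bookkeeping. One must keep track of the parity $\widetilde{g^s}=\tilde f+\tilde T_s$, which (together with $\omega^{rs}$ being supported on pairs with $\tilde r=\tilde s$) is exactly what makes the Leibniz signs in $[g^sT_s,T_r]$ collapse to $(-1)^{\tilde f\tilde T_r}$, and one must use the conventions $\omega_{rs}=-(-1)^{\tilde r\tilde s}\omega_{sr}$ and $\omega^{rs}=-(-1)^{\tilde r\tilde s}\omega^{sr}$ consistently, so that the contractions $\sum_s\omega_{rs}\omega^{sk}=\delta^k_r=\sum_s\omega^{rs}\omega_{sk}$ hold with no stray factors $(-1)^{\tilde r}$. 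Once these conventions are fixed, the two bracket computations above close the proof.
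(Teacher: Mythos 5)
Your proposal is correct and follows essentially the same route as the paper: decompose $X=f\partial_z+\sum_s g^sT_s$ in the frame $\{\partial_z,T_r\}$, extract the $\partial_z$-component of $[X,T_r]$ using $[T_r,T_s]=-2\omega_{rs}\partial_z$, and invert $\omega$ to solve for the $g^s$. The only additions are explicit justifications the paper leaves implicit (reduction to the generators $T_r$ via the Leibniz rule, and the bijectivity of $f\mapsto X_f$), which are welcome but do not change the argument.
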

\begin{proof}
Seen the definition of $T_r$, we can say that any vector field on $\R^{2l+1|n}$ can be written as $X=f\partial_z+\sum_{i=1}^{2l+n}g_iT_i$. The vector field $X$ is thus called contact vector field if and only if 
\[
[f\partial_z+\sum_{i=1}^{2l+n}{g_iT_i},T_j]\in\,<T_1,\cdots,T_{2l+n}>,\quad\forall j\in
\{1,\cdots,2l+n\}.
\]
This formula can be also written as
\begin{eqnarray*}
[f\partial_z+\sum_{i=1}^{2l+n}{g_iT_i},T_j]&=&-(-1)^{\tilde{f}\tilde{T}_j}[T_j,f\partial_z]-(-1)^{(\tilde{g}_i+\tilde{T}_i)\tilde{T}_j}\sum_{i=1}^{2l+n}{[T_j,g_iT_i]}\\
&=&-(-1)^{\tilde{f}\tilde{T}_j}T_j(f)\partial_z+(-1)^{\tilde{T}_i\tilde{T}_j}2\sum_{i=1}^{2l+n}{g_i\omega_{ji}\partial_z}-(-1)^{(\tilde{g}_i+\tilde{T}_i)\tilde{T}_j}\sum_{i=1}^{2l+n}{T_j(g_i)T_i}.
\end{eqnarray*}
This vector field $X$ is in the kernel of $\alpha$ if and only if
\[
-(-1)^{\tilde{f}\tilde{T}_j}T_j(f)-2\sum_{i=1}^{2l+n}g_i\omega_{ij}=0,
\]
for all $j\in \{1,\cdots, 2l+n\}$. This equation shows that all proposed vector fields $X_f$ are contact vector fields. In the other hand, this equation implies also that
\[
-(-1)^{\tilde{f}\tilde{T}_j}T_j(f)\omega^{jk}-2\sum_{i=1}^{2l+n}{g_i\omega_{ij}\omega^{jk}}=0, \quad\forall j\in\{1,\cdots,2l+n\},
\]
or, when we sum on j, we have
\[
-(-1)^{\tilde{f}\tilde{T}_j}\omega^{jk}T_j(f)=2\sum_{i=1}^{2l+n}{g_i\omega_{ij}\omega^{jk}}=2\sum_{i=1}^{2l+n}{g_i\delta_i^k}.
\]
We obtain directly that
\[
g_k=-\frac{1}{2}(-1)^{\tilde{f}\tilde{T}_j}\omega^{jk}T_j(f)
\]
and this allows us to conclude.

\end{proof}
The following proposition gives, in the super case, the formula of the Lagrange bracket of the superfunctions $f$ and $g$. It is the generalization of the formula given in \cite{Mel09, MelNibRad13, GarMelOvs07}.
\begin{prop}
The set $\cK(2l+1|n)$ is a Lie sub superalgebra of $\mathrm{Vect}(\R^{2l+1|n})$.	More explicitly, if $X_f$ and $X_g$ are the elements of $\cK(2l+1|n)$, one writes
\begin{equation}\label{LAGRANGE}
[X_f,X_g]=X_{\{f,g\}}
\end{equation} where the superfunction $\{f,g\}$ is given by
\begin{equation}\label{LagrBracket}
{\{f,g\}}:=fg'-f'g-\frac{1}{2}(-1)^{\tilde{T}_r\tilde{f}}\omega^{rs}T_r(f)T_s(g),
\end{equation} and where $h'=\partial_z(h)$.
\end{prop}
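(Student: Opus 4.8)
The plan is to prove both assertions at once: I will first show that the bracket $[X_f,X_g]$ of two contact vector fields is again a contact vector field, which already gives the stability of $\cK(2l+1|n)$ under the bracket, and then identify the superfunction generating it as $\{f,g\}$, which yields \ref{LAGRANGE} with formula \ref{LagrBracket}. For the stability I would argue directly from the definition. Let $X_f,X_g\in\cK(2l+1|n)$ and let $T\in\mathrm{Tan}(\R^{2l+1|n})$ be arbitrary. The graded Jacobi identity in $\Vect(\R^{2l+1|n})$ gives
\[
[[X_f,X_g],T]=[X_f,[X_g,T]]-(-1)^{\tilde f\tilde g}[X_g,[X_f,T]],
\]
and since $X_f$ and $X_g$ preserve the contact structure, $[X_f,T]$ and $[X_g,T]$ lie in $\mathrm{Tan}(\R^{2l+1|n})$, hence so do $[X_f,[X_g,T]]$ and $[X_g,[X_f,T]]$; therefore $[[X_f,X_g],T]\in\mathrm{Tan}(\R^{2l+1|n})$ for every $T$, i.e. $[X_f,X_g]\in\cK(2l+1|n)$. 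By the previous proposition it is thus of the form $X_h$ for a unique superfunction $h$: uniqueness comes from the fact that $(\partial_z,T_1,\dots,T_{2l+n})$ is a basis of $\Vect(\R^{2l+1|n})$ (see \ref{tangentdistr}) in which the component of $X_h$ along $\partial_z$ is exactly $h$; equivalently, by left-linearity of the pairing, $h=\langle X_h,\alpha\rangle$ since $\langle\partial_z,\alpha\rangle=1$ and $\langle T_r,\alpha\rangle=0$.

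It then remains to compute this $\partial_z$-component of $[X_f,X_g]$. Writing $X_f=f\partial_z+f^rT_r$ and $X_g=g\partial_z+g^sT_s$ with $f^r=-\tfrac12(-1)^{\tilde f\tilde T_k}\omega^{kr}T_k(f)$ and $g^s=-\tfrac12(-1)^{\tilde g\tilde T_k}\omega^{ks}T_k(g)$, I would expand $[X_f,X_g]$ by graded bilinearity into the four brackets $[f\partial_z,g\partial_z]$, $[f\partial_z,g^sT_s]$, $[f^rT_r,g\partial_z]$ and $[f^rT_r,g^sT_s]$, and apply the graded Leibniz rule for a bracket of the form $[aU,bV]$. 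Using the relations \ref{TANGDIST}, in particular $[\partial_z,T_r]=0$ and $[T_r,T_s]=-2\omega_{rs}\partial_z$, all second order contributions $T_s(f^r)$ and $T_r(g^s)$ stay attached to some $T$ and are discarded, and the surviving $\partial_z$-parts are: $fg'-f'g$ from $[f\partial_z,g\partial_z]$; one term from $[f\partial_z,g^sT_s]$ and one term from $[f^rT_r,g\partial_z]$, each built from the first order operators $T_r$ alone; and one term coming from the $[T_r,T_s]$-commutator inside $[f^rT_r,g^sT_s]$. Substituting the expressions for $f^r,g^s$ and using the skew-symmetry $\omega^{rs}=-(-1)^{\tilde r\tilde s}\omega^{sr}$ together with $\omega_{rs}\omega^{sk}=\delta^k_r$ (equivalently $\omega^{kr}\omega_{rs}=\delta^k_s$), the $[f^rT_r,g\partial_z]$-contribution cancels the $[T_r,T_s]$-contribution, while the $[f\partial_z,g^sT_s]$-contribution reorganizes as $-\tfrac12(-1)^{\tilde T_r\tilde f}\omega^{rs}T_r(f)T_s(g)$; adding $fg'-f'g$ gives $h=\{f,g\}$ as in \ref{LagrBracket}, hence $[X_f,X_g]=X_{\{f,g\}}$.

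The only genuine difficulty is the sign bookkeeping. One must carry the Koszul signs consistently through the graded Leibniz rule, through the substitution of $f^r$ and $g^s$ (note that $T_k(f)$ has parity $\tilde f+\tilde T_k$), and through the commutations of the superfunctions $T_r(f)$ and $T_s(g)$. The collapse of the resulting mass of signs into the single clean factor $(-1)^{\tilde T_r\tilde f}$ of \ref{LagrBracket} relies decisively on the block structure of the matrix $(\omega_{rs})$ displayed after \ref{gencoordin}, namely that $\omega_{rs}$ — and hence $\omega^{rs}$ — vanishes unless $\tilde r=\tilde s$: this is what lets one replace $\tilde r\,\tilde s$ by $\tilde r$ and reduce the various parity-dependent prefactors to one sign. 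Apart from that the verification is routine, if somewhat long; alternatively one could skip the abstract stability step and compute every component of $[X_f,X_g]$ directly, but that roughly doubles the sign-chasing.
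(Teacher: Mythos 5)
Your proposal is correct and follows essentially the same route as the paper: decompose $[X_f,X_g]$ into the four brackets $[f\partial_z,g\partial_z]$, $[f\partial_z,g^sT_s]$, $[f^rT_r,g\partial_z]$, $[f^rT_r,g^sT_s]$, use stability of $\cK(2l+1|n)$ under the bracket to reduce the identification of the generating superfunction to reading off the $\partial_z$-coefficient, and observe that the cross terms combine (via the skew-symmetry of $\omega$ and the fact that $\omega^{rs}$ vanishes unless $\tilde r=\tilde s$) into the single term $-\tfrac12(-1)^{\tilde T_r\tilde f}\omega^{rs}T_r(f)T_s(g)$. The one genuine addition is your justification of the stability step by the graded Jacobi identity, which the paper asserts without argument; that is a welcome clarification rather than a different proof.
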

\begin{proof}
The Lie bracket $[X_f,X_g]$ of the two contact vector fields $X_f$ and $X_g$ is also a contact vector field. Indeed, the Lie bracket 
\[
[X_f,X_g]=[f\partial_z-\frac{1}{2}(-1)^{\tilde{T}_r\tilde{f}}\omega^{rs}T_r(f)T_s,g\partial_z-\frac{1}{2}(-1)^{\tilde{T}_k\tilde{g}}\omega^{kl}T_k(g)T_l] 
 \] is written as
  \begin{multline*}
[f\partial_z,g\partial_z]-\frac{1}{2}(-1)^{\tilde{T}_k\tilde{g}}\omega^{kl}[f\partial_z,T_k(g)T_l]
-\frac{1}{2}(-1)^{\tilde{T}_r\tilde{f}}\omega^{rs}[T_r(g)T_s,g\partial_z]\\
+\frac{1}{4}\omega^{rs}\omega^{kl}[T_r(f)T_s,T_k(g)T_l].
 \end{multline*}
 The sum of the first three Lie brackets equals to
  \begin{multline*}
 (fg'-f'g)\partial_z+\frac{1}{2}(-1)^{\tilde{g}(\tilde{T}_k+\tilde{f})}\omega^{kl}T_k(g)T_l(f)\partial_z-\frac{1}{2}(-1)^{\tilde{f}\tilde{T}_r}\omega^{rs}T_r(f)T_s(g)\partial_z\\
 -\frac{1}{2}(-1)^{\tilde{g}\tilde{T}_k}\omega^{kl}fT_k(g')T_s+\frac{1}{2}(-1)^{\tilde{f}\tilde{T}_r+\tilde{f}\tilde{g}}\omega^{rs}gT_r(f')T_s
 \end{multline*}
 and the fourth Lie bracket equals to 
 \[
\frac{1}{4}(-1)^{\tilde{f}\tilde{T}_r+\tilde{g}\tilde{T}_k}\omega^{rs}\omega^{kl}\left( 
T_r(f)T_sT_k(g)T_l-(-1)^{\tilde{f}\tilde{g}}T_k(g)T_lT_r(f)T_s 
\right) -\frac{1}{2}(-1)^{(\tilde{T}_r+\tilde{f})\tilde{g}}\omega^{kr}T_k(g)T_r(f)\partial_z.
 \]

Since the Lie bracket of two contact vector fields is also a contact vector field and since $X_{\{f,g\}}$ is written, via the formula \eqref{CONTACTFORME}, by 
\[
(\{f,g\})\partial_z-\frac{1}{2}(-1)^{(\tilde{f}+\tilde{g})\tilde{T}_r}\omega^{rs}T_r(\{f,g\})T_s,
 \]
 then we can see that the sum of the coefficients of $\partial_z$ gives the formula of Lagrange bracket. One has thus
 \[
\{f,g\}= fg'-f'g-(-1)^{\tilde{f}\tilde{T}_r}\frac{1}{2}\omega^{rs}T_r(f)T_s(g).
 \] 
\end{proof}
Via the Lagrange formula \eqref{LAGRANGE}, the Lie bracket of contact vector fields, which defines a Lie superalgebra structure on $\cK(2l+1|n)$, induces a Lie  superalgebra structure on the superspace $C^\infty(\R^{2l+1|n})$ by the bilinear law given by \eqref{LagrBracket}.\\
The following remark is very important:
\begin{rmk}
The Lagrange bracket of superfunctions $f$ and $g$ \, of degree to most equal two
 is always a superfunction of degree to most equal two.
\end{rmk}

This remark allows us to define the Lie superalgebra constituted by the contact vector fields $X_f$ which the associated superfunctions $f$ are of degrees to most equal two in $z,x_i,y_i$ and $\theta_i$ variables. We denote this Lie superalgebra temporarily by $\mathfrak{g}\subset\cK(2l+1|n)$.

\section{Matrix realization of $\spo(2l+2|n)$}
In this section, we embed a Lie sub-superalgebra $\spo(2l+2|n)$ of $\gl(2l+2|n)$ in the Lie superalgebra $\mathrm{Vect}(\R^{2l+1|n})$. We use the method used in \cite{MatRad11} and we show that the Lie  superalgebra obtained is exactly isomorphic to $\mathfrak{g}$.\\

We consider a matrix $G$ defined by $G=\begin{pmatrix}
J&0\\0&\id_n
\end{pmatrix} $ such that $J=\begin{pmatrix}
0&-\id_{l+1}\\\id_{l+1}&0
\end{pmatrix}$.  We define on $\R^{2l+2|n}$ the following superskewsymmetric form $\omega$ associated to the matrix $G$ as

\begin{equation}
\omega:\R^{2l+2|n}\times\R^{2l+2|n}\to\R:(U,V)\to V^tGU,
\end{equation} where $A^t$ is the usual transpose of the matrix $A$.

\begin{df}
We define a Lie superalgebra $\spo(2l+2|n)$ constituted by the matrices $A$ of $\gl(2l+2|n)$ which preserve the form $\omega$, i.e such that
\begin{equation}\label{formeOmega}
\omega(AU,V)+(-1)^{\tilde{A}\tilde{U}}\omega(U,AV)=0,\quad \forall U,V\in\R^{2l+2|n}.
\end{equation}
\end{df}
Our second main result is the following:
\begin{theorem}
The Lie superalegbra $\spo(2l+2|n)$ is the space of the matrices $A=\begin{pmatrix}
A_1&A_2\\A_3&A_4
\end{pmatrix}$ that the blocks $A_1,A_2,A_3$  and $A_4$ satisfy the following conditions
\begin{enumerate}
\item $A_1^tJ+JA_1=0,i.e: A_1\in \mathfrak{sp}(2l+2)$\\
\item $A_4^t+A_4=0, i.e: A_4\in \mathfrak{o}(n)$\\
\item $A_3=-A_2^tJ$.
\end{enumerate}
\end{theorem}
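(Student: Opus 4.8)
\section*{Proof sketch}

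The plan is to convert the coordinate-free requirement \eqref{formeOmega} into an identity between $2\times2$ block matrices and to read off conditions (1)--(3) block by block. The relation \eqref{formeOmega} only makes sense for a $\Z_2$-homogeneous $A$, so for a general $A=\begin{pmatrix}A_1&A_2\\A_3&A_4\end{pmatrix}$ it is to be understood as the requirement that the even part $A_{\bar{0}}=\begin{pmatrix}A_1&0\\0&A_4\end{pmatrix}$ and the odd part $A_{\bar{1}}=\begin{pmatrix}0&A_2\\A_3&0\end{pmatrix}$ each satisfy it. Because $\omega(U,V)=V^tGU$ is bilinear, \eqref{formeOmega} for a homogeneous $A$ is equivalent to its restriction to all pairs $(U,V)=(e_a,e_b)$ of homogeneous basis vectors of $\R^{2l+2|n}$ (the first $2l+2$ even, the last $n$ odd). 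A direct evaluation gives $\omega(Ae_a,e_b)=(GA)_{ba}$ and $\omega(e_a,Ae_b)=(A^tG)_{ba}$, so \eqref{formeOmega} becomes
\begin{equation*}
(GA)_{ba}+(-1)^{\tilde{A}\,\tilde{e}_a}(A^tG)_{ba}=0,\qquad a,b\in\{1,\ldots,2l+2+n\}.
\end{equation*}

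I would then substitute $G=\begin{pmatrix}J&0\\0&\id_n\end{pmatrix}$ and examine the four ranges of the indices $(a,b)$ separately. For $A=A_{\bar{0}}$ the sign is $+1$, and $GA_{\bar{0}}+A_{\bar{0}}^tG$ is block-diagonal with upper-left block $JA_1+A_1^tJ$ and lower-right block $A_4+A_4^t$; setting these to zero yields (1) and (2). It is worth stressing that the \emph{same} relation $\omega(A_{\bar{0}}U,V)+\omega(U,A_{\bar{0}}V)=0$ produces a symplectic condition on $A_1$ but an orthogonal one on $A_4$, the difference coming solely from $J^t=-J$ against $\id_n^t=\id_n$; the two off-diagonal blocks are identically zero and impose nothing. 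For $A=A_{\bar{1}}$ the sign is $(-1)^{\tilde{e}_a}$, and now $GA_{\bar{1}}$ and $A_{\bar{1}}^tG$ are block-off-diagonal, so the diagonal blocks give $0=0$; taking $e_a$ even and $e_b$ odd gives $A_3+A_2^tJ=0$, whereas taking $e_a$ odd and $e_b$ even gives $JA_2-A_3^t=0$, and since $J^t=-J$ both amount to condition (3), $A_3=-A_2^tJ$.

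Finally, every implication above is reversible: assuming (1)--(3) and reassembling the block identities shows that \eqref{formeOmega} holds on all pairs of basis vectors, hence on all $U,V$, so $A\in\spo(2l+2|n)$. The one delicate point, and the one I would write out in full, is the sign bookkeeping for the odd part $A_{\bar{1}}$: one must carry the factor $(-1)^{\tilde{A}\tilde{U}}$ of \eqref{formeOmega} through the evaluation on basis vectors and check that the even/odd and odd/even index ranges collapse to the single relation (3). Everything else is a routine comparison of block matrices.
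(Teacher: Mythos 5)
Your proposal is correct and follows essentially the same route as the paper: both expand $\omega(AU,V)+(-1)^{\tilde A\tilde U}\omega(U,AV)$ blockwise and read off conditions (1)--(3) by restricting to the even/even, odd/odd and mixed sectors of $(U,V)$. Your version is slightly more careful than the paper's in two respects --- you make the homogeneous decomposition of $A$ explicit before invoking the sign $(-1)^{\tilde A\tilde U}$, and you check that the fourth sector ($e_a$ odd, $e_b$ even) yields $JA_2-A_3^t=0$, equivalent to (3), which the paper leaves implicit --- but these are refinements of the same argument rather than a different one.
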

\begin{proof}
We consider the following matrices
\[A=\begin{pmatrix} A_1& A_2\\A_3&A_4\end{pmatrix}\in\mathfrak{gl}(2l+2|n);\quad\mbox{where}\quad A_1\in\mathfrak{gl}(2l+2), A_2\in\R^n_{2l+2},  A_3\in\R^{2l+2}_n,A_4\in\R^n_n.\]
For all vector fields $U=\begin{pmatrix}U_1\\U_2\end{pmatrix},V=\begin{pmatrix}V_1\\V_2
\end{pmatrix}$
of $\R^{2l+2|n}$, we compute the matrices $A$ of $\gl(2l+2|n)$ which satisfy \eqref{formeOmega}. First, we can see that the first term $\omega(AU,V)$ of \eqref{formeOmega} equals to \[V_1^tJA_1U_1+V_2^tA_3U_1+V_1^tJA_2U_2+V_2^tA_4U_2\] and the second term $(-1)^{\tilde{A}\tilde{U}}\omega(U,AV)$ equals to \[V_1^tA_1^tJU_1+V_2^tA_2^tJU_1-V_1^tA_3^tU_2+V_2^tA_4^tU_2.\]
It is also easy to see that the formula \eqref{formeOmega} equals to
\begin{multline}\label{formeOmega2}
V_1^tJA_1U_1+V_2^tA_3U_1+V_1^tJA_2U_2+V_2^tA_4U_2+V_1^tA_1^tJU_1\\
+V_2^tA_2^tJU_1-V_1^tA_3^tU_2+V_2^tA_4^tU_2=0, \quad\forall U,V\in\R^{2l+2|n}.
\end{multline}
In particular, if $U_2=0$, $V_2=0$, then the equation \eqref{formeOmega2} equals to
\[V_1^t(JA_1+A_1^tJ)U_1=0, i.e:JA_1+A_1^tJ=0.\] This last condition means that the blocks $A_1$ are symplectic matrices.\\
If we set $U_1=0$ and $V_1=0$, then the equation \eqref{formeOmega2} becomes
 \[V_2^t(A_4^t+A_4)U_2=0,i.e: A_4^t+A_4=0.\] This condition means that the block $A_4$ is an orthogonal matrix.\\
 Finally, if $U_2=0$ and $V_1=0$, then the equation \eqref{formeOmega2} equals to
 \[V_2^t(A_3+A_2^tJ)U_1=0,i.e: A_3+A_2^tJ=0.\]
\end{proof}

In the following, we describe a basis of $\spo(2l+2|n)$. If we denote by $a_{i,j}$ the number $a\in\R$ situated on the $i^{th}$ line and on the $j^{th}$ column, we can see that this basis is constituted by the three following types of matrices:\\
The first type of matrices of the basis of $\spo(2l+2|n)$ is associated to the symplectic algebra $\mathfrak{sp}(2l+2)$ and is given by following family of matrices:

\begin{multline}\label{BaseMatrix}
 \left(\begin{array}{cc|ccc|c}
 {}&{}&{}&{}&{}&{}\\
 1_{i,j}&{}&{}&0&{}&0\\
{}&{}&{}&{}&{}&{}\\
 \hline
 {}&{}&{}&{}&0&{}\\
 {}&{}&{}&{}&{}&{}\\
 0&{}&{}&-1_{(l+1+j),(l+1+i)}&{}&0\\
{}&{}&{}&{}&{}&{}\\
{}&{}&0&{}&{}&{}\\ 
\hline
0&{}&{}&0&{}& 0_{n,n}\\
\end{array}\right);
 \left(\begin{array}{cc|cc|c}
 {}&{}&0&1_{i,(l+1+j)}&{}\\
 0&{}&{}&{}&0\\
{}&{}&1_{j,(l+1+i)}&{}&{}\\
{}&{}&{}&0&{}\\
 \hline
 {}&{}&{}&{}&{}\\
 0&{}&0&{}&0\\
{}&{}&{}&{}&{}\\ 
\hline
{}&{}&{}&{}&{}\\
0&{}&0&{}&0_{n,n}\\
{}&{}&{}&{}&{}\\
\end{array}\right), \\ \left(\begin{array}{cc|cc|c}
 {}&{}&{}&{}&{}\\
0&{}&{}&0&0\\
{}&{}&{}&{}&{}\\
 \hline
0&1_{(l+1+i),j}&{}&{}&{}\\
 {}&{}&{}&{}&{}\\
1_{(l+1+j),i}&{}&{}&0&0\\
{}&0&{}&{}&{}\\
\hline
{}&{}&{}&{}&{}\\
0&{}&0&{}&0_{n,n}\\
{}&{}&{}&{}&{}\\
\end{array}\right)
\quad  1\leq i,j\leq l.
\end{multline} 

The second type of matrices is given by the following family of matrices:
\begin{multline}\label{BaseMatrix1}
\left(\begin{array}{cc|cc}
{}&{}&{}&{}\\
0&{}&{}&1_{(i,(2l+2+j))}\\
{}&{}&{}&{}\\
\hline
{}&{}&{}&{}\\
-1_{((2l+2+j),i-(l+1))}&{}&{}&0\\
{}&{}&{}&{}\\
\end{array}\right)\quad \mbox{if}\quad l+1\leq i\leq 2l+2,\quad 1\leq j\leq n\\
\quad\mbox{and}\quad
\left(\begin{array}{cc|cc}
{}&{}&{}&{}\\
0&{}&{}&1_{(i,(2l+2+j))}\\
{}&{}&{}&{}\\
\hline
{}&{}&{}&{}\\
1_{((2l+2+j),(l+1+i))}&{}&{}&0\\
{}&{}&{}&{}\\
\end{array}\right)\quad\mbox{if}\quad 1\leq i\leq l+1,\quad 1\leq j\leq n.
\end{multline}
And the third type is associated to the orthogonal algebra $\mathfrak{o}(n)$ and is given by:
\begin{equation}\label{BaseMatrix2}
\left(\begin{array}{cc|ccc}
{}&{}&{}&{}&{}\\
0&{}&{}&0&{}\\
{}&{}&{}&{}&{}\\
\hline
{}&{}&0&{}&{}\\
{}&{}&{}&{}&1_{((2l+2+i),(2l+2+j))}\\
0&{}&{}&{}&{}\\
{}&{}&-1_{((2l+2+j),(2l+2+i))}&{}&{}\\
{}&{}&{}&{}&0
\end{array}\right).
\end{equation}
Our third main result is given by the following theorem:
\begin{theorem}
The Lie superalgebra $\mathfrak{g}$ made in evidence at the end of the section \ref{coucou2} and whose superfunctions $f$ are degrees to most equal two is isomorphic to the Lie superalgebra $\spo(2l+2|n)$.
\end{theorem}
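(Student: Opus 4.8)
The plan is to make the embedding of $\spo(2l+2|n)$ into $\Vect(\R^{2l+1|n})$ announced above fully explicit, and then to identify its image with $\mathfrak{g}$. The embedding is nothing but the restriction to $\spo(2l+2|n)$ of the projective action of $\gl(2l+2|n)$ on the affine chart $\R^{2l+1|n}$ of $\mathbb{P}^{2l+1|n}$ used by Mathonet and Radoux in \cite{MatRad11}. Concretely, writing $\R^{2l+2|n}=\R^{1|0}\oplus\R^{2l+1|n}$ and decomposing a matrix $A\in\gl(2l+2|n)$ accordingly as $A=\left(\begin{smallmatrix}a&\beta\\\gamma&D\end{smallmatrix}\right)$, with $a\in\R$, $\gamma\in\R^{2l+1|n}$, $\beta\in(\R^{2l+1|n})^{*}$ and $D\in\gl(2l+1|n)$, the associated vector field $\varphi(A)$ on $\R^{2l+1|n}$ is the sum of the constant vector field with coefficients $\gamma$, the linear vector field attached to $D$, and a quadratic term, namely the Euler (radial) vector field multiplied by the affine superfunction determined by $a$ and $\beta$. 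Since $c\,\Id$ does not preserve $\omega$ when $c\neq0$, the subalgebra $\spo(2l+2|n)$ meets the kernel $\langle\Id\rangle$ of this projective representation trivially, so $\varphi$ restricts on $\spo(2l+2|n)$ to an \emph{injective} homomorphism of Lie superalgebras; it therefore only remains to compute its image.

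The core of the proof is to evaluate $\varphi$ on the three families of generators \eqref{BaseMatrix}, \eqref{BaseMatrix1} and \eqref{BaseMatrix2} of $\spo(2l+2|n)$ and, for each one, to rewrite the resulting vector field in the frame $\partial_z,T_1,\dots,T_{2l+n}$, so as to put it in the contact normal form \eqref{CONTACTFORME}; this exhibits it as a field $X_f$ with an explicit superfunction $f$. I expect the outcome to be: the symplectic generators \eqref{BaseMatrix} (the block $A_1\in\mathfrak{sp}(2l+2)$) give the fields $X_f$ with $f$ an arbitrary polynomial of degree at most two in the even Darboux coordinates $z,x_i,y_i$ — which is precisely the classical ($n=0$) case established in \cite{CoOv12}; the generators \eqref{BaseMatrix1} (the blocks $A_2$ and $A_3=-A_2^{t}J$) give the $X_f$ with $f$ an odd superfunction of degree at most two, i.e. $f$ a linear combination of the $\theta_j$ and of the products $z\theta_j,x_i\theta_j,y_i\theta_j$; and the orthogonal generators \eqref{BaseMatrix2} (the block $A_4\in\mathfrak{o}(n)$) give the fields $X_{\theta_i\theta_j}$ with $1\leqslant i<j\leqslant n$. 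Since $f\mapsto X_f$ is injective — the coefficient of $\partial_z$ of $X_f$ in the frame $\partial_z,T_r$ is $f$ itself — and since the superfunctions just listed form a basis of the space of superfunctions of degree at most two in the variables $z,x_i,y_i,\theta_j$, it follows that $\varphi(\spo(2l+2|n))=\mathfrak{g}$.

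Putting these facts together, $\varphi$ restricts to a Lie superalgebra homomorphism $\spo(2l+2|n)\to\Vect(\R^{2l+1|n})$ that is injective with image $\mathfrak{g}$, and hence $\varphi\colon\spo(2l+2|n)\to\mathfrak{g}$ is an isomorphism of Lie superalgebras. As a consistency check one verifies the dimension identity $\dim\spo(2l+2|n)=(l+1)(2l+3)+\binom{n}{2}+(2l+2)n$, which coincides with the dimension of the space of superfunctions of degree at most two in the $2l+1$ even variables $z,x_i,y_i$ and the $n$ odd variables $\theta_j$: the even part $\mathfrak{sp}(2l+2)\oplus\mathfrak{o}(n)$ accounts for $(l+1)(2l+3)+\binom{n}{2}$, and the odd part (parametrized by $A_2$) for $(2l+2)n$.

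I do not anticipate any conceptual obstacle: $\varphi$ is merely a restriction of a well-known embedding, and the purely even part of the statement is already contained in \cite{CoOv12}. The real work — and the only delicate point — is the sign bookkeeping inherent to the super setting: checking that $\varphi$ applied to the odd and mixed generators \eqref{BaseMatrix1}, \eqref{BaseMatrix2} really equals $X_f$ for the claimed $f$ requires keeping track of the Koszul factors $(-1)^{\tilde{f}\,\tilde{T_r}}$ appearing in \eqref{CONTACTFORME}, of the anticommutativity of the odd coordinates $\theta_j$, and of the fact that $\omega^{rs}$ restricted to the odd indices equals $\id_n$.
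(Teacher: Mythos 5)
Your proposal is correct and follows essentially the same route as the paper: compose the quotient map $\spo(2l+2|n)\hookrightarrow\pgl(2l+2|n)$ (injective since $\Id\notin\spo(2l+2|n)$) with the Mathonet--Radoux projective embedding \eqref{plongProj}, evaluate on the three families of generators \eqref{BaseMatrix}, \eqref{BaseMatrix1}, \eqref{BaseMatrix2}, and identify each image with a contact field $X_f$ for $f$ of degree at most two, which is exactly what the paper's proof does. Your added dimension count and the explicit remark that $f\mapsto X_f$ is injective (so the images span all of $\mathfrak{g}$) are sound and in fact make the concluding identification slightly more transparent than the paper's.
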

\begin{proof}
Because of $Id\notin \spo(2l+2|n)$ we can define the injective homomorphism 
 \begin{equation}\label{iota3}
\iota:\mathfrak{spo}(2l+2|n)\to\mathfrak{pgl}(2l+2|n):A\mapsto [A].
\end{equation}
Now, the Lie superalgebra $\pgl(2l+2|n)$ can be embedded into the Lie superalgebra of vector fields on $\R^{2l+1|n}$ thanks to the projective embedding define in \cite{MatRad11} in the following way:
\begin{equation}\label{plongProj}
\left[\left(\begin{array}{ll}0&\xi\\v&B\end{array}\right)\right]\mapsto-\sum_{i=1}^{2l+1+n}v^i
\partial_{t^i}-\sum_{i,j=1}^{2l+1+n}(-1)^{\tilde{j}(\tilde{i}+\tilde{j})}B_j^i t^j\partial_{t^i}+\sum_{i,j=1}^{2l+1+n}(-1)^{\tilde{j}}\xi_j t^j t^i\partial_{t^i},
\end{equation}
where $v\in \R^{2l+1|n}$, $\xi\in (\R^{2l+1|n})^*$, $B\in\mathfrak{gl}(2l+1|n)$ and the coordinates $t^{1},t^{2},\cdots,t^{2l+1+n}$ corresponds respectively to $x_1,\cdots, x_l, z,y_1,\cdots,y_l,\theta_{1},\cdots,\theta_{n}.$\\

Composing $\iota$ with the projective embedding, we can embed $\spo(2l+2|n)$ into $\mathrm{Vect}(\R^{2l+1|n})$. If we compute this embedding on the generators of $\spo(2l+2|n)$ written above, we obtain via \eqref{CONTACTFORME}, the contact projective vector fields $X_f$ for a certain given $f\in C^\infty(\R^{2l+1|n})$.\\
In the following, we study explicitly the three types of matrices described above.\\
For the first matrix of \eqref{BaseMatrix}, i.e: when $i,j\in[1,l]$, we obtain
 the following  contact projective vector fields.
\begin{enumerate}
\item If $i=j=1$, we have
\[x_i\partial_{x_i}+y_i\partial_{y_i}+\theta_i\partial_{\theta_i}+2z\partial_z\quad, i.e: \quad 2X_z \]\\ 
\item If $i=1$ and $j\neq 1$, then we have
\[ x_{j-1}(x_i\partial_{x_i}+y_i\partial_{y_i}+z\partial_z+\theta_i\partial_{\theta_i})+
 z\partial_{y_{j-1}},\quad i.e: \quad 2X_{x_{j-1}z}\]\\
\item If $i\neq 1$ and $j=1$, then we obtain
\[
-\partial_{x_{i-1}}+y_{i-1}\partial_z,\quad i.e: \quad
2X_{y_{i-1}}
\]  
\item If  $i\neq 1$ and $j\neq 1$, then \[
y_{i-1}\partial_{y_{j-1}}-x_{j-1}\partial_{x_{i-1}}, \quad i.e:\quad
2X_{x_{i-1}y_{i-1}}.
\]
\end{enumerate}
For the second matrix of \eqref{BaseMatrix}, we obtain the following contact projective vector fields
\begin{enumerate}
\item If $i=j=1$, one has 
\[
z(z\partial_z+x_i\partial_{x_i}+y_i\partial_{y_i}+\theta_j\partial_{\theta_j}), 
\quad i.e: \quad X_{z^2}
\] 
\item If $i=j$ and $j\neq 1$, we have 
\[ -y_{i-1}\partial_{x_{i-1}}, \quad i.e: \quad
X_{y^2_{i-1}},
\]
\item If $i\neq j$ and $i=1$, one has 
\[
 y_{j-1}(x_i\partial_{x_i}+y_i\partial_{y_i}+z\partial_z+\theta_j\partial_{\theta_j})-z\partial_{x_{j-1}}, \quad i.e:\quad
2X_{y_{j-1}z}
\]
\item If $i\neq j$ and $i\neq 1$,  one has
\[
 -(y_{j-1}\partial_{x_{i-1}}+y_{i-1}\partial_{x_{j-1}}), \quad i.e:\quad
2X_{y_{i-1}y_{j-1}}.
\]
\end{enumerate}
For the third matrix of \eqref{BaseMatrix}, we obtain the following contact projective vector fields
\begin{enumerate}
\item If $i=j=1$, we obtain

\[ -\partial_z,   \quad i.e: \quad-X_1 \] 
\item If $i=j$ and $j\neq 1$, we have
\[  -x_{i-1}\partial_{y_{i-1}},  \quad i.e: \quad -X_{x^2_{i-1}} 
\]
\item If $i\neq j$ and $i=1$,  we write
\[ -(\partial_{y_{j-1}}+x_{j-1}\partial_z),  \quad i.e: \quad -2X_{x_{j-1}} 
\]
\item If $i\neq j$ and $i\neq 1$, then we have
\[-(x_{j-1}\partial_{y_{i-1}}+x_{i-1}\partial_{y_{j-1}}),  \quad i.e:\quad -2X_{x_{j-1}x_{i-1}}.
\]
\end{enumerate}
Now, we study explicitly any matrix of \eqref{BaseMatrix1}, i.e: when $l+1\leqslant i\leqslant 2l+2$ and $1\leqslant j \leqslant n$.\\
For the first matrix of \eqref{BaseMatrix1}, we obtain the following contact projective vector fields 
\begin{enumerate}
\item If $i=l+2$, then we have
\[
 \theta_j\partial_z+\partial_{\theta_j},  \quad i.e:\quad 2X_{\theta_j}
\]
\item and if $i\neq l+2$, we obtain
\[
\theta_j\partial_{y_{i-l-2}}+x_{i-l-2}\partial_{\theta_j}, \quad i.e:\quad 2X_{x_{i-l-2}\theta_j}.
\] 
\end{enumerate}
For the second matrix of \eqref{BaseMatrix1}, we have the following contact projective vector fields
\begin{enumerate}
\item If $i=1$, one has
\[
-\theta_j(x_i\partial_{x_i}+y_i\partial_{y_i}+z\partial_z+
\theta_j\partial_{\theta_j})-z\partial_{\theta_j}, \quad i.e:\quad -2X_{z\theta_j}
\]
\item and if $i\neq 1$, then we obtain
\[
 \theta_j\partial_{x_{i-1}}-y_{i-1}\partial_{\theta_j}, \quad i.e:\quad -2X_{y_{i-1}\theta_j}.
\]
\end{enumerate}
Finally, for the matrix \eqref{BaseMatrix2}, i.e: when $2l+2\leqslant i,j\leqslant 2l+2+n$, we obtain the contact projective vector field
\[
 \theta_{i-1}\partial_{\theta_{j-1}}-\theta_{j-1}\partial_{\theta_{i-1}}, i<j ,  \quad i.e:\quad 2X_{\theta_{i-1}\theta_{j-1}}.
\]
The Lie superalgebra $\mathfrak{g}$ is isomorphic to $\spo(2l+2|n)$ thanks to the identification of the generators of these two Lie superalgebras.
\end{proof}
\begin{rmk}
The Lie superalgebra $\spo(2l+2|n)$ is into the intersection of the Lie superalgebra of contact vector fields $\cK(2l+1|n)$ and the Lie superalgebra of projective vector fields $\pgl(2l+2|n)$. Thus, as in \cite{CoOv12}, the elements $X_f$ of $\spo(2l+2|n)$ are called contact projective vector fields.
\end{rmk}


\begin{thebibliography}{10}
\bibitem{Lei80}
D.~A.~Leites.
\newblock { Introduction to the theory of Supermanifolds}. 
\newblock Russian Math.Surveys 35:1 (1980),1-64.

\bibitem{Ber87}
F.~A.~Berezin.
\newblock { Introduction to superanalysis}, volume~9 of {\em Mathematical
  Physics and Applied Mathematics}.
\newblock D. Reidel Publishing Co., Dordrecht, 1987.
\newblock Edited and with a foreword by A. A. Kirillov, With an appendix by V.
  I. Ogievetsky, Translated from the Russian by J. Niederle and R. Koteck{\'y},
  Translation edited by D.~Leites.
  
 \bibitem{KacAdvances}
V.~G. Kac.
\newblock Lie superalgebras.
\newblock {\em Advances in Math.}, 26(1):8--96, 1977.

\bibitem{CoOv12}
C.~Conley and V.~Ovsienko.
\newblock Linear Differential Operators on Contact manifolds.
\newblock {\em arxiv:math-Ph/1205.6562v1},24p, 2012.

\bibitem{LPS}
D.~Leites, E.~Poletaeva and V.~Serganova.
\newblock On Einstein Equations on Manifolds. and Supermanifolds.
\newblock {\em J. of Nonlinear Math. Phys.}, 9(4):394-425, 2002, math.DG/0306209

\bibitem{MatRad11}
P.~Mathonet and F.~Radoux.
\newblock Projectively equivariant quantizations over the Superspace $\R^{p|q}$.
\newblock {\em Lett. Math. Phys.}, 98:311--331, 2011.

  \bibitem{MelNibRad13}
    N.~Mellouli, and A. Nibirantiza,  and F. Radoux.
    \newblock{$\spo(2|2)$-Equivariant Quantizations on the Supercircle $S^{1|2}$}.   \newblock{SIGMA 9 (2013), 055, 17 pages}
     \newblock{ http://dx.doi.org/10.3842/SIGMA.2013.055 }, arxiv:math.DG/1302.3727v2
\bibitem{Nib14}
A.~Nibirantiza.
\newblock {\em Sur les quantifications équivariantes en supergéométrie de contact}.
\newblock {Th\`ese de Doctorat, Universit\'e de Li\`ege.}
 http://hdl.handle.net/2268/169770, 2014.
 
\bibitem{GarMelOvs07}
H.~Gargoubi, N.~Mellouli, and V.~Ovsienko.
\newblock {Differential operators on supercircle: conformally equivariant
  quantization and symbol calculus.}
\newblock {\em Lett. Math. Phys.}, 79(1):51--65, 2007.

\bibitem{Gr13}
    ~J.Grabowski.
     \newblock{Graded contact manifolds and contact courant algebroids.}
   \newblock{Journal of Geometry and Physics},68(2013),27-58.
\bibitem{Mel09}
N.~Mellouli.
\newblock Second-order conformally equivariant quantization in dimension $1|2$.
\newblock {\em SIGMA}, 5(111), 2009.

\end{thebibliography}
\end{document}